\theoremstyle{plain}
\newtheorem{theorem}{Theorem}[section]
\newtheorem{lemma}[theorem]{Lemma}
\newtheorem{definition}[theorem]{Definition}
\theoremstyle{remark}
\newtheorem{claim}{Claim}
\title [The topological minimality preserved by perturbation]
{Bridge surfaces with the topological minimality preserved by perturbation}
\author[J. H. Lee]{Jung Hoon Lee}
\address{Department of Mathematics and Institute of Pure and Applied Mathematics,
Chonbuk National University, Jeonju 54896, Korea}
\email{junghoon@jbnu.ac.kr}
\begin{document}

\begin{abstract}
We show that except for $n = 2$ if a bridge surface for a knot is an index $n$ topologically minimal surface,
then after a perturbation it is still topologically minimal with index at most $n+1$.
\end{abstract}

\maketitle

\section{Introduction}\label{sec1}

For a closed $3$-manifold $M$,
a {\em Heegaard splitting} $M = V^+ \cup_S V^-$ is a decomposition of $M$ into two handlebodies $V^+$ and $V^-$
with $\partial V^+ = \partial V^- = S$.

Let $K$ be a knot in $M$.
The notion of Heegaard splitting can be extended to the pair $(M, K)$.
Suppose that $V^{\pm} \cap K$ is a collection of $n$ boundary parallel arcs $a^{\pm}_1, \ldots, a^{\pm}_n$ in $V^{\pm}$.
Each $a^{\pm}_i$ is called a {\em bridge}.
The decomposition $(M, K) = (V^+, V^+ \cap K) \cup_S (V^-, V^- \cap K)$ is called
a {\em bridge splitting} of $(M, K)$, and
we say that $K$ is in $n$-bridge position with respect to $S$.
By a {\em bridge surface}, we mean $S - K$.

Compressing disks for the bridge surface $S - K$ in $M - K$ and the information on how they intersect
enable us to understand topological properties of $(M, K)$.
The {\em disk complex} $\mathcal{D}(F)$ of a surface $F$ embedded in a $3$-manifold
is a simplicial complex defined as follows.

\begin{itemize}
\item Vertices of $\mathcal{D}(F)$ are isotopy classes of compressing disks for $F$.
\item A collection of $k+1$ vertices forms a $k$-simplex
if there are representatives for each vertex that are pairwise disjoint.
\end{itemize}

A surface is {\em incompressible} if there are no compressing disks,
so the disk complex of an incompressible surface is empty.
A surface $F$ is {\em strongly irreducible} if $F$ compresses to both sides and
every compressing disk for $F$ on one side intersects every compressing disk on the opposite side.
So the disk complex of a strongly irreducible surface is disconnected.
Extending these notions, Bachman defined topologically minimal surfaces \cite{Bachman},
which can be regarded as topological analogues of (geometrically) minimal surfaces.

A surface $F$ is {\em topologically minimal} if
$\mathcal{D}(F)$ is empty or $\pi_i(\mathcal{D}(F))$ is non-trivial for some $i$.
The {\em topological index} of $F$ is $0$ if $\mathcal{D}(F)$ is empty, and
the smallest $n$ such that $\pi_{n-1}(\mathcal{D}(F))$ is non-trivial, otherwise.
Equivalently, an index $n$ topologically minimal surface $F$ has an $(n-2)$-connected $\mathcal{D}(F)$ and
$\pi_{n-1}(\mathcal{D}(F))$ is non-trivial.
Topologically minimal surfaces have nice properties, e.g.
if an irreducible manifold contains an incompressible surface and a topologically minimal surface, then
the two surfaces can be isotoped so that any intersection loop is essential in both surfaces.

A {\em perturbation} is an operation on a bridge splitting that perturbs $K$ near a point of $K \cap S$
so that a new local minimum and an adjacent local maximum is created.
The two new bridges admit cancelling disks that intersect in one point.
See Figure \ref{fig1}.
In this paper, we show that if a bridge surface is topologically minimal,
then a perturbation preserves topological minimality, except for one case.
More precisely,

\begin{theorem}\label{thm}
If a bridge surface for a knot is an index $n$($\ne 2$) topologically minimal surface,
then after a perturbation it is still topologically minimal with index at most $n+1$.
\end{theorem}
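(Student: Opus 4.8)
The plan is to compare the disk complex $\mathcal{D}'$ of the perturbed bridge surface with the disk complex $\mathcal{D} = \mathcal{D}(S-K)$ of the original one, and to prove that $\mathcal{D}'$ is homotopy equivalent to the suspension $\Sigma\mathcal{D}$. Since a perturbation is a local modification, I would first confine it to a small ball $B$ meeting $K$ in the two new cancelling bridges $a^+, a^-$. From the two cancelling bridge disks I build honest compressing disks $E^+ \subset V^+$ and $E^- \subset V^-$ for the perturbed surface, namely the frontiers of regular neighborhoods of the bridge disks; each $\partial E^{\pm}$ bounds a twice-punctured subdisk of $S$ around the relevant new punctures. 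The first task is to verify that $E^+$ and $E^-$ are essential and non-isotopic, and, crucially, that the single-point intersection of the cancelling disks forces $\partial E^+$ and $\partial E^-$ to intersect, so that $E^+$ and $E^-$ span no edge of $\mathcal{D}'$ and together form a $0$-sphere.

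Next I would isolate the subcomplex $\mathcal{B} \subset \mathcal{D}'$ spanned by compressing disks that can be isotoped off $B$, equivalently, made disjoint from both $E^+$ and $E^-$. Outside $B$ the perturbed pair agrees with $(S,K)$, so restriction should give a simplicial isomorphism $\mathcal{B} \cong \mathcal{D}$; the content here is an innermost-disk and outermost-arc argument showing that a disk disjoint from $E^+$ and $E^-$ can genuinely be pushed out of $B$ without losing essentiality. By construction each of $E^+$ and $E^-$ is disjoint from every disk of $\mathcal{B}$, so the join $\mathcal{B}*\{E^+,E^-\} = \Sigma\mathcal{B}$ already sits inside $\mathcal{D}'$.

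The heart of the argument is a deformation retraction of $\mathcal{D}'$ onto $\Sigma\mathcal{B}$. Given a compressing disk $D$ meeting $E^+\cup E^-$, I would surger $D$ along an innermost disk of intersection and use the product structure on $B$ to reduce $|D\cap(E^+\cup E^-)|$, arguing that at least one resulting piece is again essential; iterating drives every disk either into $\mathcal{B}$ or onto one of $E^{\pm}$. Promoting this surgery flow to an honest deformation retraction at the simplicial level, so that simplices map to simplices and the homotopy is continuous, is the step I expect to be the main obstacle: one must run the surgeries simultaneously on all disks of a given simplex and check that the intersections with $E^+$ and with $E^-$ can be removed independently, which is exactly where the disjointness pattern coming from the cancelling disks is used. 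Granting this, $\mathcal{D}'\simeq\Sigma\mathcal{B}\simeq\Sigma\mathcal{D}$.

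Finally I would read off the index. Since $\mathcal{D}$ is $(n-2)$-connected with $\pi_{n-1}(\mathcal{D})\ne 0$, its suspension is $(n-1)$-connected, and for $n\ge 3$ the Freudenthal suspension theorem yields $\pi_n(\Sigma\mathcal{D})\cong\pi_{n-1}(\mathcal{D})\ne 0$, the stable range $n-1\le 2(n-2)$ holding precisely when $n\ge 3$; hence $\mathcal{D}'$ is topologically minimal of index at most $n+1$. The cases $n=0$ (empty $\mathcal{D}$, with $\Sigma\emptyset = S^0$ of index $1$) and $n=1$ (disconnected $\mathcal{D}$, with $\Sigma\mathcal{D}$ connected and $\pi_1\ne 0$, of index $2$) are checked directly. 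The excluded case $n=2$ is exactly where this mechanism breaks: then $\Sigma\mathcal{D}$ is simply connected and $\pi_2(\Sigma\mathcal{D})\cong H_1(\mathcal{D})$, which can vanish even when $\pi_1(\mathcal{D})\ne 0$ (for instance if $\pi_1(\mathcal{D})$ is perfect), so the suspension need not raise the index by one and may even be contractible. This is precisely why the hypothesis $n\ne 2$ is required.
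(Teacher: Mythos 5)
Your overall skeleton matches the paper's: both arguments compare $\mathcal{D}(S-\overline{K})$ with the suspension of $\mathcal{D}(S-K)$ over the two canonical perturbation disks (your $E^{\pm}$ are the paper's $\overline{D}$, $\overline{E}$), both dispatch $n=0$ and $n=1$ directly, both invoke the Freudenthal suspension theorem with the same stable-range computation $n-1\le 2(n-2)$ for $n\ge 3$, and both identify perfectness of $\pi_1(\mathcal{D}(S-K))$ as the obstruction at $n=2$. But the heart of your argument has a genuine gap, and you flag it yourself: the step ``promoting this surgery flow to an honest deformation retraction \dots Granting this.'' Innermost-disk surgery involves choices (which intersection curve, which side of the surgered disk), the resulting pieces need not be essential, and nothing in the flow guarantees that disjoint disks are carried to disjoint or equal disks --- which is exactly the condition needed for a map of simplicial complexes. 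You cannot grant this step; it is the entire technical content of the theorem.

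Moreover, you overshoot in a way that should have been a warning sign: a homotopy equivalence $\mathcal{D}(S-\overline{K})\simeq \mathrm{Sus}(\mathcal{D}(S-K))$ would give topological index \emph{exactly} $n+1$, which the paper only states as a conjecture. What the paper actually proves is weaker and suffices: a simplicial \emph{retraction} $r:\mathcal{D}(S-\overline{K})\to \mathrm{Sus}(\mathcal{D}(S-K))$, since a retraction induces a surjection on $\pi_n$ and nontriviality of $\pi_n(\mathrm{Sus}(\mathcal{D}(S-K)))$ then forces $\pi_n(\mathcal{D}(S-\overline{K}))\ne 1$. The construction of $r$ is deliberately asymmetric, unlike your symmetric surgery picture: disks in $V^--\overline{K}$ other than $\overline{E}$ are made disjoint from $E$ by outermost-arc surgery and then freed from $D$ by banding on copies of $\overline{E}$ (the result being isotopic to the surgered disk); disks in $V^+-\overline{K}$ disjoint from $E$ are freed from $D$ by detour bands through the bridge disk adjacent to $E$; and \emph{all} disks in $V^+-\overline{K}$ meeting $E$ are collapsed to the single vertex $\overline{D}$. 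These specific, canonical operations are what make the vertex map well defined, and the paper still must verify edge-compatibility case by case (eight cases in its Lemma 2.3). Your proposal identifies the right target space and the right homotopy-theoretic endgame, but replaces the one nontrivial construction with an unjustified and likely false stronger claim.
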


The main idea of the proof is to construct a retraction from the disk complex of a bridge surface to
a space whose homotopy group is non-trivial as in \cite{Bachman-Johnson} and \cite{Lee}.
We conjecture that the topological index of the perturbed bridge surface in Theorem \ref{thm} is $n+1$.

If we use (reduced) homology groups instead of homotopy groups in the definition of topological index,
then Theorem \ref{thm} holds for all $n$.

\begin{definition}
A surface $F$ is {\em strongly topologically minimal} if
$\mathcal{D}(F)$ is empty or $\widetilde{H}_i(\mathcal{D}(F))$ is non-trivial for some $i$.
The {\em strong topological index} of $F$ is $0$ if $\mathcal{D}(F)$ is empty, and
the smallest $n$ such that $\widetilde{H}_{n-1}(\mathcal{D}(F))$ is non-trivial, otherwise.
\end{definition}

\begin{theorem}\label{thm2}
If a bridge surface for a knot is an index $n$ strongly topologically minimal surface,
then after a perturbation it is still strongly topologically minimal with index at most $n+1$.
\end{theorem}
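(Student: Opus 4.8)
The plan is to exhibit the suspension of the original disk complex as a retract of the perturbed one, and then to read off the conclusion from the homology suspension isomorphism, which, unlike its homotopy counterpart, carries no connectivity hypothesis; this is exactly what lets the statement hold for every $n$.

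First I would fix notation. Write $\mathcal{D} = \mathcal{D}(S-K)$ for the disk complex of the given bridge surface, let $S'$ be the surface obtained by a single perturbation near a point of $K \cap S$, and set $\mathcal{D}' = \mathcal{D}(S'-K)$. The perturbation produces two cancelling disks, an upper disk $D^+$ and a lower disk $D^-$, meeting in exactly one point; in particular they are not disjoint, so $\{D^+, D^-\}$ is not an edge of $\mathcal{D}'$. Every compressing disk for $S-K$ can be isotoped off the small ball supporting the perturbation, hence is simultaneously a compressing disk for $S'-K$ that is disjoint from both $D^+$ and $D^-$; this realizes $\mathcal{D}$ as a full subcomplex of $\mathcal{D}'$ each of whose vertices is joined to both $D^+$ and $D^-$. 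Consequently the full subcomplex of $\mathcal{D}'$ spanned by $\{D^+, D^-\} \cup V(\mathcal{D})$ is precisely the join $\{D^+, D^-\} * \mathcal{D} = \Sigma\mathcal{D}$, the suspension of $\mathcal{D}$.

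Next I would construct a retraction $r \colon \mathcal{D}' \to \Sigma\mathcal{D}$, which is the heart of the matter; it is carried out following the method of \cite{Bachman-Johnson} and \cite{Lee} quoted above, and is the same retraction that underlies the homotopy statement, Theorem~\ref{thm}. Given an arbitrary compressing disk $E$ for $S'-K$, one isotopes $E$ to meet $D^+ \cup D^-$ minimally and then, via surgeries along $D^+$ and $D^-$ together with the special structure of the perturbation (the cancelling disks have small, disjoint support and meet in a single point), assigns to $E$ a vertex of $\Sigma\mathcal{D}$, leaving fixed the vertices already in $\Sigma\mathcal{D}$. The work is to verify that the assignment is independent of the isotopy representative and sends disjoint disks to disjoint or equal images, so that $r$ is a well-defined simplicial retraction; I expect this to be the only genuine obstacle, and the point at which the geometry of the cancelling disks is really used. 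Once $r$ is in hand, $\Sigma\mathcal{D}$ is a retract of $\mathcal{D}'$, so the composite $\widetilde{H}_\ast(\Sigma\mathcal{D}) \to \widetilde{H}_\ast(\mathcal{D}') \to \widetilde{H}_\ast(\Sigma\mathcal{D})$ is the identity and the inclusion-induced map $\widetilde{H}_\ast(\Sigma\mathcal{D}) \to \widetilde{H}_\ast(\mathcal{D}')$ is injective in every degree.

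Finally I would invoke the suspension isomorphism in reduced homology, $\widetilde{H}_n(\Sigma\mathcal{D}) \cong \widetilde{H}_{n-1}(\mathcal{D})$, which holds in all degrees with no connectivity assumption. Since $S$ has strong topological index $n \ge 1$, we have $\widetilde{H}_{n-1}(\mathcal{D}) \ne 0$, hence $\widetilde{H}_n(\Sigma\mathcal{D}) \ne 0$, and by the injectivity just established $\widetilde{H}_n(\mathcal{D}') \ne 0$. Thus the least degree in which $\mathcal{D}'$ has non-trivial reduced homology is at most $n$, i.e. $S'$ is strongly topologically minimal of strong index at most $n+1$; the incompressible case $n = 0$ is immediate, since the perturbation makes $\mathcal{D}'$ nonempty and disconnected, forcing strong index $1$. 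The contrast with Theorem~\ref{thm} is exactly here: in the homotopy setting one instead needs $\pi_n(\Sigma\mathcal{D}) \ne 0$, which follows from $\pi_{n-1}(\mathcal{D}) \ne 0$ only through the Freudenthal suspension theorem and therefore fails precisely when $n = 2$ (a non-trivial but perfect $\pi_1(\mathcal{D})$ can yield $\pi_2(\Sigma\mathcal{D}) = 0$), whereas the homology suspension has no such range restriction.
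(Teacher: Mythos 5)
Your proposal takes essentially the same route as the paper: the paper constructs exactly this simplicial retraction $r\colon \mathcal{D}(S-\overline{K}) \to {\mathrm{Sus}}(\mathcal{D}(S-K))$ via disk surgery and banding along the cancelling disks (its Lemmas~\ref{lem1} and~\ref{lem2}, with the vertex partition $\mathcal{E}_1,\ldots,\mathcal{E}_4$), and then deduces Theorem~\ref{thm2} from the suspension isomorphism $\widetilde{H}_n({\mathrm{Sus}}(\mathcal{D}(S-K))) \simeq \widetilde{H}_{n-1}(\mathcal{D}(S-K))$, obtained there by Mayer--Vietoris on the two cones, together with surjectivity of $r_{\ast}$ on $\widetilde{H}_n$ --- which is equivalent to your injectivity-of-inclusion formulation. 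The one slip, cosmetic rather than substantive, is that the two suspension points must be the compressing disks $\overline{D}$ and $\overline{E}$ dual to the cancelling disks $D$ and $E$ (the cancelling disks themselves meet $\overline{K}$ and so are not vertices of $\mathcal{D}(S-\overline{K})$); since $\overline{D}$ and $\overline{E}$ still intersect each other and are disjoint from every disk of $\mathcal{D}(S-K)$, the join structure you invoke is unchanged.
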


\section{Proof of Theorem \ref{thm}}\label{Sec2}

Let $M$ be decomposed into two handlebodies $V^+$ and $V^-$ with common boundary $S$, and
let $K$ be a knot in bridge position with respect to $S$.
Let $\overline{K}$ be a knot obtained from $K$ by a perturbation
with cancelling disks $D$ and $E$ in $V^+$ and $V^-$ respectively.
See Figure \ref{fig1}.
Let $\overline{D}$ denote a disk in $V^+ - K$ such that $\partial \overline{D} = \partial N(D \cap S)$, where
$N(D \cap S)$ is a neighborhood of $D \cap S$ taken in $S$, and
similarly let $\overline{E}$ denote a disk in $V^- - K$ such that $\partial \overline{E} = \partial N(E \cap S)$.
See Figure \ref{fig2}.
Obviously, we may assume that a disk disjoint from $D$ (resp. $E$)
is also disjoint from $\overline{D}$ (resp. $\overline{E}$).

\begin{figure}[ht!]
\begin{center}
\includegraphics[width=12cm]{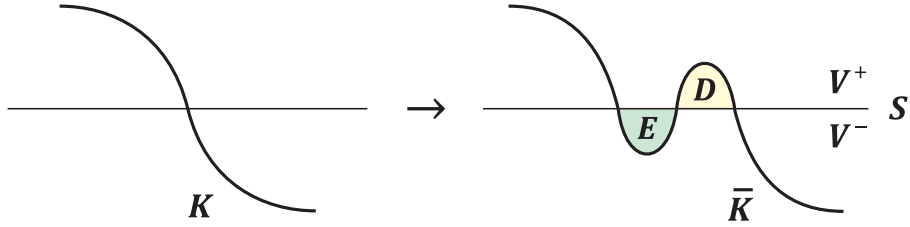}
\caption{A perturbation}\label{fig1}
\end{center}
\end{figure}

\begin{figure}[ht!]
\begin{center}
\includegraphics[width=6cm]{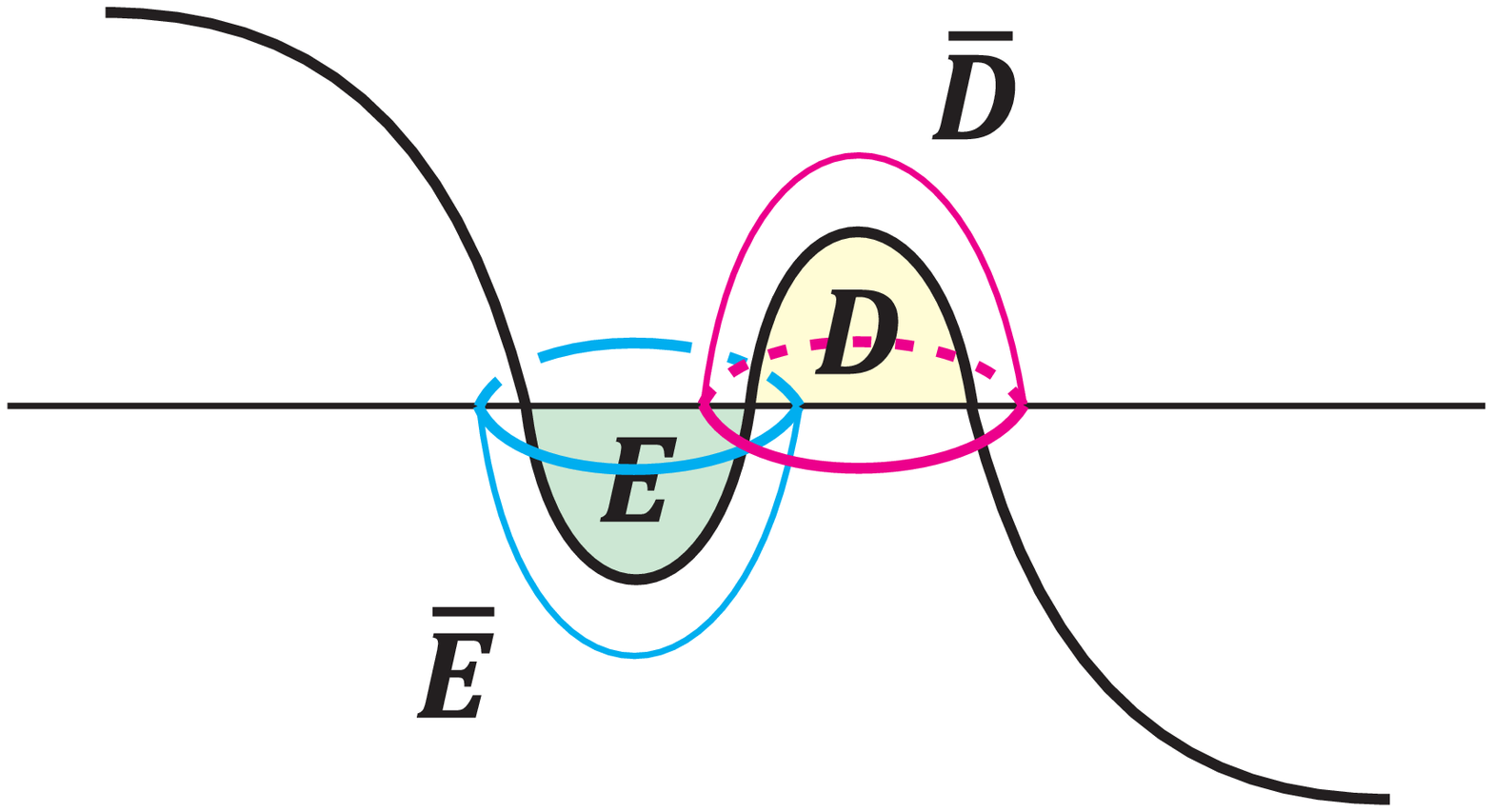}
\caption{$\overline{D}$ and $\overline{E}$}\label{fig2}
\end{center}
\end{figure}

\begin{lemma}\label{lem1}
We can naturally embed $\mathcal{D}(S - K)$ into $\mathcal{D}(S - \overline{K})$.
\end{lemma}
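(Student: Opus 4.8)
The plan is to exhibit an explicit simplicial map from $\mathcal{D}(S-K)$ to $\mathcal{D}(S-\overline{K})$ and to verify that it is an injective embedding onto a subcomplex. The key observation is that $\overline{K}$ differs from $K$ only in a small neighborhood of one point $p \in K \cap S$, where the perturbation introduces the two new cancelling bridges with disks $D$ and $E$. Outside this neighborhood, $M - K$ and $M - \overline{K}$ agree, so a compressing disk for $S - K$ that is supported away from $p$ should serve equally well as a compressing disk for $S - \overline{K}$.

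First I would make precise the sense in which a compressing disk $\Delta$ for $S - K$ gives rise to a compressing disk for $S - \overline{K}$. Since the perturbation is local, I would isotope any given compressing disk $\Delta$ for $S - K$ so that it is disjoint from the small ball in which the perturbation takes place; this is possible because the new local maximum and minimum are created near a single point and a compressing disk can always be pushed off a sufficiently small regular neighborhood of that point. Once $\Delta$ avoids this ball, it lies in the common part of the two complements and is automatically a compressing disk for $S - \overline{K}$ as well. This defines the map on vertices. The remark immediately preceding the lemma, that a disk disjoint from $D$ is disjoint from $\overline{D}$ and similarly for $E$, is the technical device that lets me carry this out cleanly.

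Next I would check that this vertex map is well-defined on isotopy classes and injective. Well-definedness amounts to showing that isotopic compressing disks for $S-K$ remain isotopic as disks for $S - \overline{K}$; this follows since an isotopy in $M - K$ can likewise be pushed off the perturbation ball and thus takes place in $M - \overline{K}$. For injectivity I would argue that if two disks become isotopic in $M - \overline{K}$, then because $\overline{K}$ contains $K$ up to the trivial cancelling arcs, the isotopy can be arranged to avoid the cancelling disks and hence descends to an isotopy in $M - K$. Finally, to promote the vertex map to a simplicial embedding, I would note that a collection of compressing disks for $S - K$ that can be realized disjointly can be simultaneously isotoped off the perturbation ball, so their images remain pairwise disjoint and span a simplex in $\mathcal{D}(S - \overline{K})$; conversely, disjointness of the images pulls back to disjointness of the originals, so the map is a simplicial isomorphism onto its image.

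I expect the main obstacle to be the injectivity and the faithfulness of the simplicial structure, rather than the mere existence of the map. The subtle point is ensuring that no two distinct isotopy classes in $\mathcal{D}(S-K)$ get identified in $\mathcal{D}(S-\overline{K})$: a priori, the extra room created by removing the cancelling bridges could allow an isotopy between disks that were non-isotopic in $M - K$. Handling this requires an innermost-disk or cut-and-paste argument showing that any isotopy in $M - \overline{K}$ between two such disks can be made disjoint from $D$ and $E$, so that it survives in $M - K$. Once this is established, the identification of $\mathcal{D}(S-K)$ with a full subcomplex of $\mathcal{D}(S-\overline{K})$ is the natural embedding claimed.
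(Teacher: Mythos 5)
Your proposal is correct and takes essentially the same route as the paper: the paper likewise exploits the locality of the perturbation, identifying $N(K)$ with $N(\overline{K} \cup D \cup E)$ (possible since $|D \cap E| = 1$) so that a compressing disk for $S-K$, pushed off this neighborhood, becomes a compressing disk for $S-\overline{K}$ disjoint from $D \cup E$ --- the same vertex map you obtain by pushing disks off the perturbation ball, with disjointness of simplices preserved for the same reason. If anything you are more cautious than the paper, whose three-sentence proof does not address injectivity on isotopy classes at all; that concern is implicitly absorbed later, since the retraction $r$ restricts to the identity on the image, and a left inverse forces the natural map to be injective.
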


\begin{figure}[ht!]
\begin{center}
\includegraphics[width=12cm]{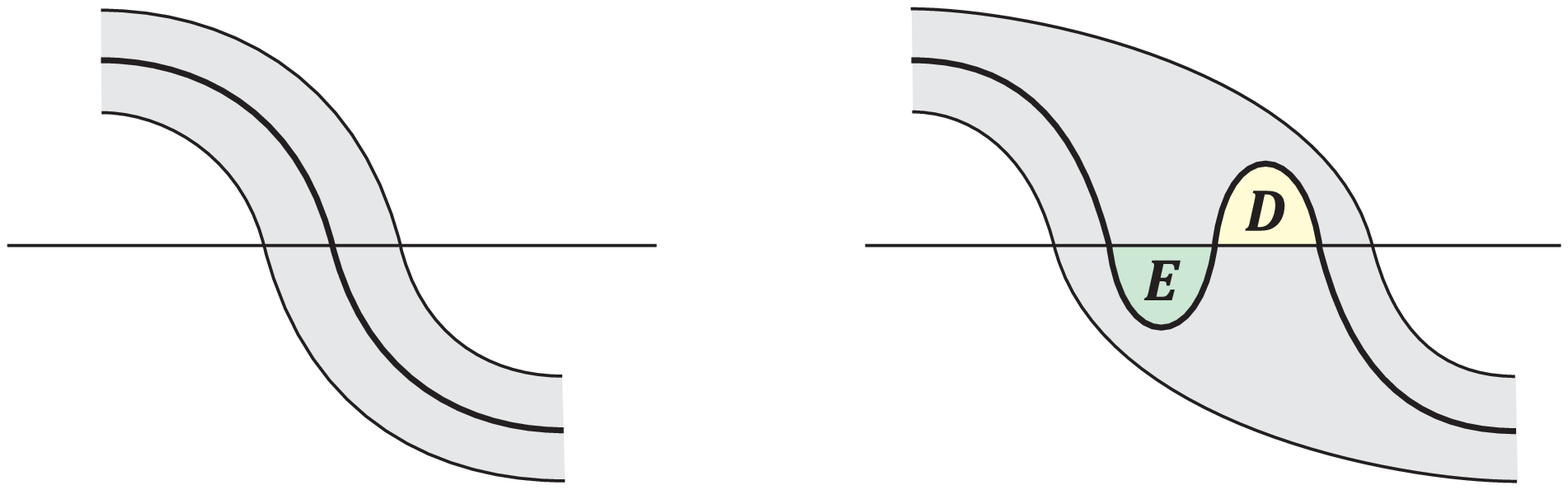}
\caption{$N(K)$ and $N(\overline{K} \cup D \cup E)$}\label{fig3}
\end{center}
\end{figure}

\begin{proof}
We can identify a neighborhood $N(K)$ of $K$
with a neighborhood $N(\overline{K} \cup D \cup E)$ of $\overline{K} \cup D \cup E$ since $|D \cap E| = 1$.
See Figure \ref{fig3}.
Then a compressing disk $C$ for $S - K$ in $V^{\pm} - N(K)$ corresponds to
a compressing disk $C'$ in $V^{\pm} - N(\overline{K} \cup D \cup E)$, and
$C'$ is also a compressing disk for $S -\overline{K}$ in $V^{\pm} - N(\overline{K})$.
Hence, by this embedding we may regard $\mathcal{D}(S - K)$ as a subcomplex of $\mathcal{D}(S - \overline{K})$.
\end{proof}

We give a partition of the set of vertices of $\mathcal{D}(S - \overline{K})$ as follows.

\begin{enumerate}
\item $\mathcal{E}_1 = \{ \overline{E} \}$
\item $\mathcal{E}_2 = \{$compressing disks in $V^- - \overline{K}$ other than $\overline{E} \}$
\item $\mathcal{E}_3 = \{$compressing disks in $V^+ - \overline{K}$ that are disjoint from $E \}$
\item $\mathcal{E}_4 = \{$compressing disks in $V^+ - \overline{K}$ that intersect $E \}$
\end{enumerate}

The four collections $\mathcal{E}_1, \mathcal{E}_2, \mathcal{E}_3, \mathcal{E}_4$ are mutually disjoint and
any compressing disk belongs to one of the collections.
A compressing disk of $\mathcal{D}(S - K)$ (as a subcomplex of $\mathcal{D}(S - \overline{K})$ by Lemma \ref{lem1})
in $V^- - \overline{K}$ (resp. $V^+ - \overline{K}$) belongs to $\mathcal{E}_2$ (resp. $\mathcal{E}_3$).
The disk $\overline{D}$ belongs to  $\mathcal{E}_4$.

We define a map $r_0$ from the set of vertices of $\mathcal{D}(S - \overline{K})$ to
the union of the set of vertices of $\mathcal{D}(S - K)$ and $\{\overline{D}, \overline{E}\}$.
Since $\overline{D}$ and $\overline{E}$ are disjoint from the compressing disks of $\mathcal{D}(S - K)$,
we can consider a simplicial complex ${\mathrm{Sus}}(\mathcal{D}(S - K))$,
which is the suspension of $\mathcal{D}(S - K)$ over $\{\overline{D}, \overline{E}\}$.
It will be shown later that the map $r_0$ extends to a retraction of
$\mathcal{D}(S - \overline{K})$ onto ${\mathrm{Sus}}(\mathcal{D}(S - K))$.

\vspace{0.1cm}

$(1)$ We define $r_0(\overline{E})$ to be $\overline{E}$.

\vspace{0.1cm}

$(2)$ Let $C \subset V^- - \overline{K}$ be a compressing disk other than $\overline{E}$.
Suppose that $C$ has nonempty minimal intersection (in its isotopy class) with $E$.
We may assume that $C \cap E$ consists of arc components by standard innermost disk argument.
Choose any outermost arc of $C \cap E$ in $C$ and let $\Delta$ be the corresponding outermost disk.
A disk surgery of $E$ along $\Delta$ yields a bridge disk and a compressing disk $C_1$ disjoint from $E$.
In case that $C$ does not intersect $E$, let $C_1 = C$.
So in any case, $C_1 \cap E = \emptyset$.

If there is any intersection point of $\partial C_1 \cap D$,
let $q$ be the point of $\partial C_1 \cap D$
which is closest to $p = D \cap E$ in the arc $D \cap S$.
Then we connect a copy of $\overline{E}$ to $C_1$ by a band along $\overline{pq}$ as in Figure \ref{fig4} and
get a new disk $C_2$ with $|\partial C_2 \cap D| < |\partial C_1 \cap D|$.
We perform this operation for all intersection points of $\partial C_1 \cap D$, and
let $C'$ be the resulting disk with $\partial C' \cap D = \emptyset$.
In fact, $C'$ is isotopic to $C_1$ if we remove $E$ by isotopy as in Figure \ref{fig5}.
We define $r_0(C)$ to be $C'$.
Note that $C' \cap (D \cup E) = \emptyset$, hence
$C'$ can be regarded as a disk in $\mathcal{D}(S - K)$.

\begin{figure}[ht!]
\begin{center}
\includegraphics[width=8cm]{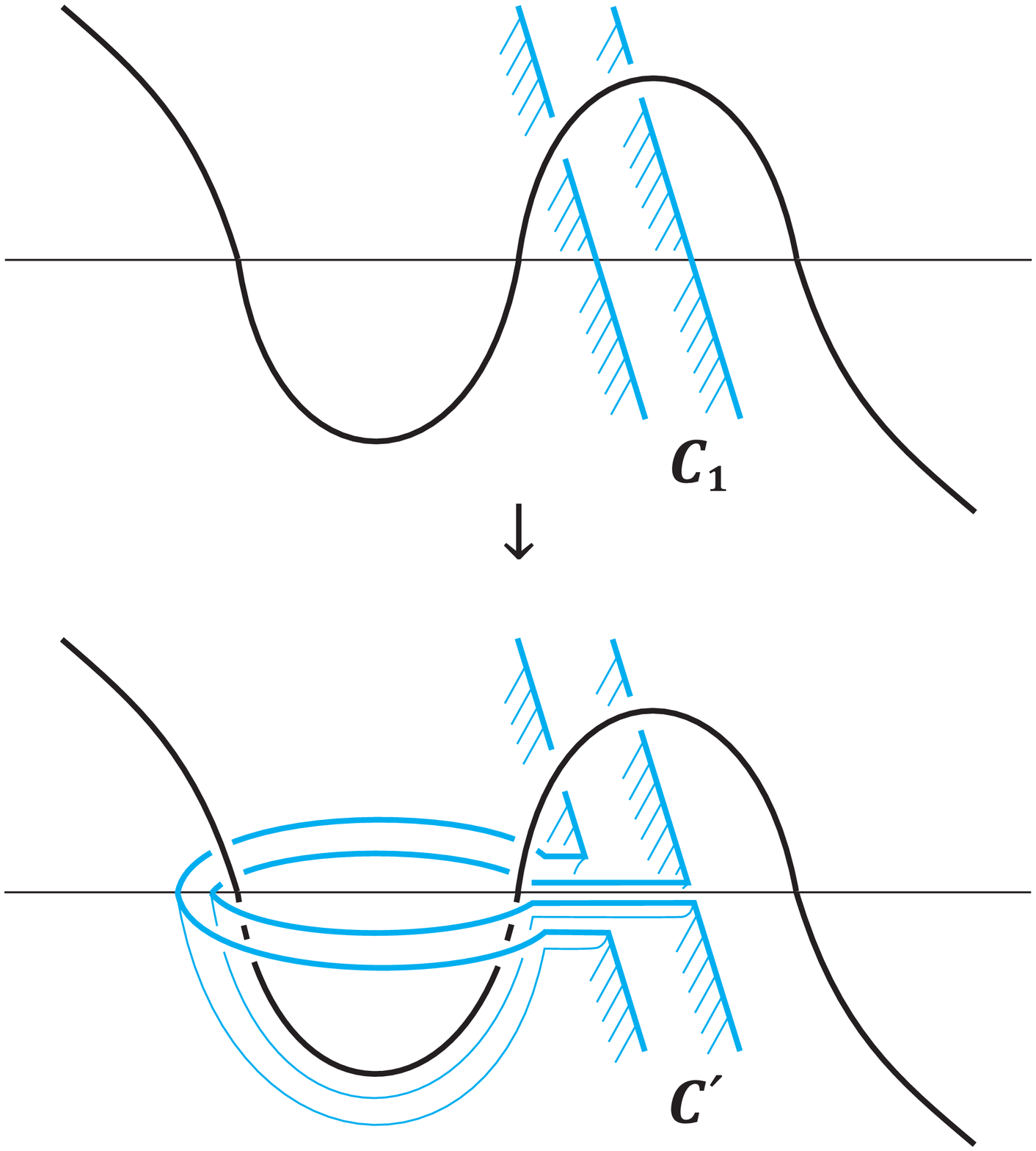}
\caption{}\label{fig4}
\end{center}
\end{figure}

\begin{figure}[ht!]
\begin{center}
\includegraphics[width=8cm]{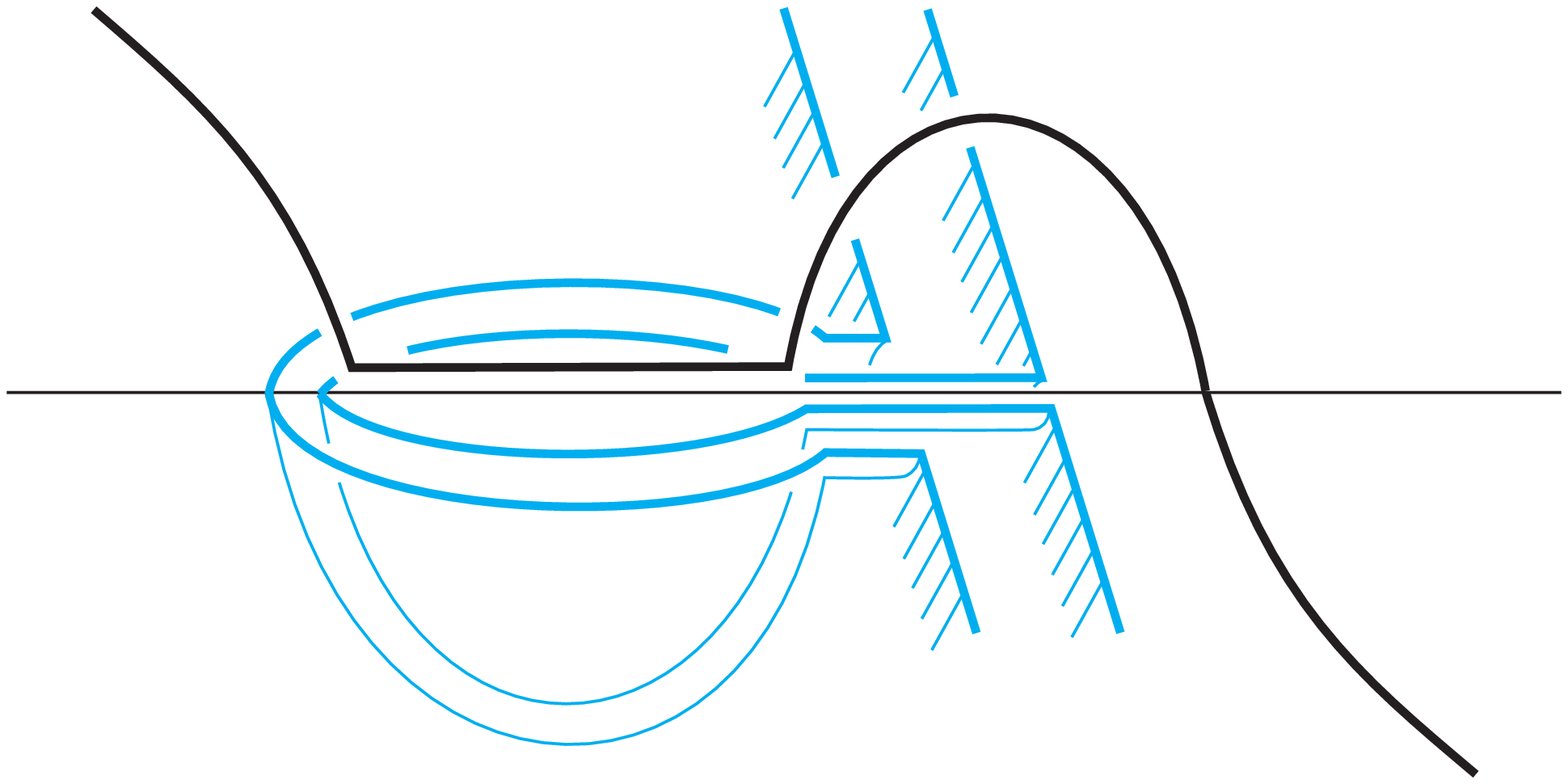}
\caption{}\label{fig5}
\end{center}
\end{figure}

\vspace{0.1cm}

$(3)$ Let $C \subset V^+ - \overline{K}$ be a compressing disk that is disjoint from $E$.
Suppose that $C$ intersects $D$.
We may assume that $C \cap D$ consists of arc components.
For every arc $\alpha$ of $C \cap D$, we cut off $C$ by $\alpha$ and
reglue the two resulting subdisks along a slightly detouring band
passing through the bridge disk($\ne D$) adjacent to $E$, as in Figure \ref{fig6}.
Even though arcs of $C \cap D$ are nested in $D$, this operation is possible.
Let $C'$ be the resulting disk obtained from $C$.
If $C$ does not intersect $D$, let $C' = C$.
The disk $C'$ is isotopic to $C$ if we remove $E$ by isotopy.
We define $r_0(C)$ to be $C'$.
Note that $C' \cap (D \cup E) = \emptyset$,
hence $C'$ can be regarded as a disk in $\mathcal{D}(S - K)$.

\begin{figure}[ht!]
\begin{center}
\includegraphics[width=8cm]{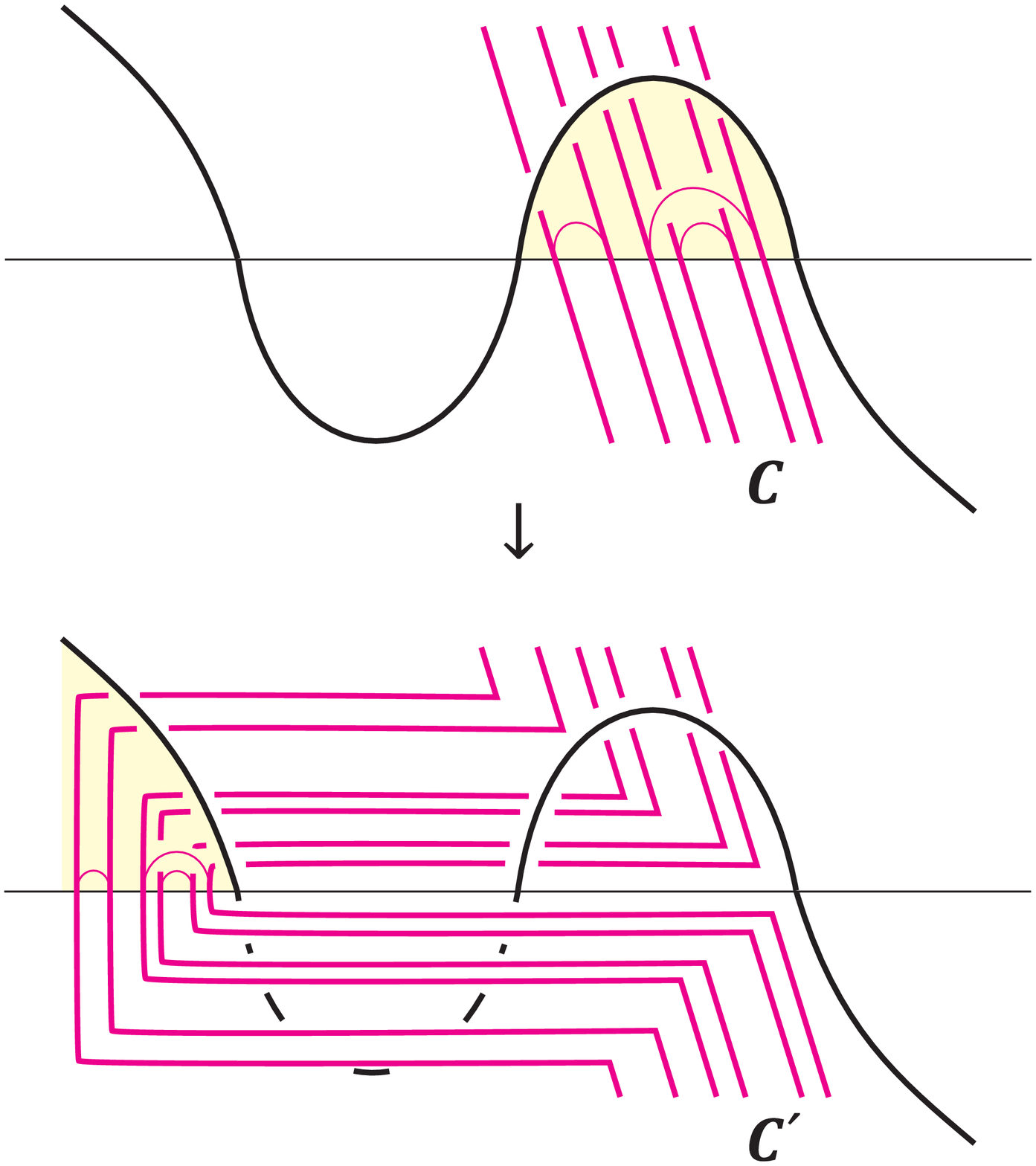}
\caption{}\label{fig6}
\end{center}
\end{figure}

\vspace{0.1cm}

$(4)$ Let $C \subset V^+ - \overline{K}$ be a compressing disk that intersects $E$.
We define $r_0(C)$ to be $\overline{D}$.

The restriction of $r_0$ to the set of vertices of ${\mathrm{Sus}}(\mathcal{D}(S - K))$ is the identity map.
Next we show that $r_0$ can be extended to a continuous map $r_1$
from the $1$-skeleton of $\mathcal{D}(S - \overline{K})$ to the $1$-skeleton of ${\mathrm{Sus}}(\mathcal{D}(S - K))$.
Then $r_1$ will be a retraction.

\begin{lemma}\label{lem2}
The map $r_0$ extends to a continuous map $r_1$
from the $1$-skeleton of $\mathcal{D}(S - \overline{K})$ to the $1$-skeleton of ${\mathrm{Sus}}(\mathcal{D}(S - K))$.
\end{lemma}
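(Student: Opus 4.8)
The plan is to build $r_1$ as a simplicial map: on each edge of $\mathcal{D}(S-\overline{K})$ I send the edge affinely onto the simplex (an edge or a single vertex) of ${\mathrm{Sus}}(\mathcal{D}(S-K))$ spanned by the $r_0$-images of its two endpoints. For this to be well defined and continuous, the only thing to verify is that for every edge $\{u,v\}$ of $\mathcal{D}(S-\overline{K})$ the vertices $r_0(u)$ and $r_0(v)$ actually span a simplex of ${\mathrm{Sus}}(\mathcal{D}(S-K))$. The resulting $r_1$ then automatically restricts to the identity on the $1$-skeleton of ${\mathrm{Sus}}(\mathcal{D}(S-K))$, since $r_0$ fixes its vertices and carries each of its edges to itself, so $r_1$ is a retraction. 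Thus the lemma reduces to an adjacency-preservation statement, which I will check by running through the partition $\mathcal{E}_1,\dots,\mathcal{E}_4$.

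Recall the edges of ${\mathrm{Sus}}(\mathcal{D}(S-K))$: every vertex of $\mathcal{D}(S-K)$ is joined to each of the two suspension vertices $\overline{D}$ and $\overline{E}$, and the only pair of vertices that is \emph{not} joined is $\{\overline{D},\overline{E}\}$. Consequently, whenever one of $r_0(u),r_0(v)$ lies in $\mathcal{D}(S-K)$ and the other is a suspension vertex the two images are automatically adjacent, and when the two images coincide there is nothing to prove. So only two families of edges require genuine work: first, I must rule out any edge whose two images are exactly $\overline{D}$ and $\overline{E}$; second, I must show that an edge whose two endpoints both retract into $\mathcal{D}(S-K)$ has disjoint (or equal) images. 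For the first family, note $r_0^{-1}(\overline{E})=\mathcal{E}_1=\{\overline{E}\}$ and $r_0^{-1}(\overline{D})=\mathcal{E}_4$, so such an edge would join $\overline{E}$ to some $C\in\mathcal{E}_4$. But a disk in $\mathcal{E}_4$ has boundary crossing the arc $E\cap S$, hence meets the loop $\partial\overline{E}=\partial N(E\cap S)$; therefore $\overline{E}$ and $C$ cannot be disjoint and span no edge of $\mathcal{D}(S-\overline{K})$. This disposes of the $\{\overline{D},\overline{E}\}$ case.

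The main obstacle is the second family, where $u,v\in\mathcal{E}_2\cup\mathcal{E}_3$ and I must prove their retracts stay disjoint in $S-K$. The engine here is the observation already recorded in the construction of $r_0$: after the surgeries and reroutings, $C'$ becomes isotopic to $C$ (or to $C_1$) once $E$ is removed by isotopy, i.e. once we pass back to the unperturbed picture $S-K$. I plan to exploit this by performing the retraction operations on $u$ and $v$ simultaneously, keeping all the auxiliary gadgets---the copies of $\overline{E}$ banded on in step $(2)$ and the detouring bands through the bridge disk adjacent to $E$ in step $(3)$---inside small product neighborhoods of $D\cup E$ that are disjoint from the part of the surface where $u$ and $v$ already differ. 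Since $u\cap v=\emptyset$, the reroutings can be nested compatibly (parallel copies of $\overline{E}$ are mutually disjoint, and nested arcs of $C\cap D$ are handled by the local move of step $(3)$), so $u'$ and $v'$ remain disjoint; the ``isotope $E$ away'' principle then identifies $u',v'$ with disjoint disks of $\mathcal{D}(S-K)$.

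I expect the opposite-side subcase $u\in\mathcal{E}_2$, $v\in\mathcal{E}_3$ to be the most delicate point, since there one must simultaneously control $\partial u$ near $E\cap S$ and $\partial v$ near $D\cap S$ inside the single perturbation region, where the two local moves interact; this is exactly where the compatibility of the moves, and the figures, must be argued with care. Once disjointness is established in every subcase, each edge of $\mathcal{D}(S-\overline{K})$ maps to a simplex of ${\mathrm{Sus}}(\mathcal{D}(S-K))$, and the affine extension $r_1$ exists and is continuous, completing the proof.
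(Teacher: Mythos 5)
Your proposal follows essentially the same route as the paper's proof: it reduces the lemma to showing that $r_0$ sends the endpoints of each edge of $\mathcal{D}(S-\overline{K})$ to disjoint or equal disks, and then verifies this case-by-case over the partition $\mathcal{E}_1,\dots,\mathcal{E}_4$, with the trivial cases disposed of by the suspension structure and the substantive same-side and opposite-side cases argued via compatibility of the surgery, banding, and rerouting moves---precisely the content of the paper's Cases $1$--$8$. Your explicit exclusion of an edge joining $\overline{E}$ to a disk of $\mathcal{E}_4$ (whose $r_0$-image would be the non-edge $\{\overline{D},\overline{E}\}$) addresses a case the paper leaves implicit and is a welcome addition, though for full rigor you should also rule out the possibility that $\partial C$ lies entirely inside $N(E\cap S)$, where it could cross the arc $E\cap S$ without meeting $\partial N(E\cap S)$.
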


\begin{proof}
It suffices to show that for disjoint compressing disks $C_1$ and $C_2$ of $\mathcal{D}(S - \overline{K})$,
either $r_0(C_1)$ and $r_0(C_2)$ are disjoint, or $r_0(C_1) = r_0(C_2)$.
Without loss of generality, there are the following cases to consider.

\vspace{0.1cm}

Case $1$. $C_1 \in \mathcal{E}_1$ and $C_2 \in \mathcal{E}_2$.

Since $r_0(C_1) = r_0(\overline{E}) = \overline{E}$ and $r_0(C_2)$ is a disk in $\mathcal{D}(S - K)$,
$r_0(C_1)$ and $r_0(C_2)$ are disjoint.

\vspace{0.1cm}

Case $2$. $C_1 \in \mathcal{E}_1$ and $C_2 \in \mathcal{E}_3$.

The disk $r_0(C_2)$ is a disk in $\mathcal{D}(S - K)$.
So similarly as Case $1$, $r_0(C_1)$ and $r_0(C_2)$ are disjoint.

\vspace{0.1cm}

Case $3$. $C_1 \in \mathcal{E}_2$ and $C_2 \in \mathcal{E}_3$.

Suppose that $C_1$ intersects $E$.
The operations in the definition of $r_0(C_1)$ are done in two steps.
In the step of disk surgery of $E$ along outermost disk,
the resulting disk is disjoint from $C_2$ because $C_2$ is disjoint from $E$.

The remaining banding operations for $r_0(C_1)$ and $r_0(C_2)$ result disjoint $r_0(C_1)$ and $r_0(C_2)$.
See Figure \ref{fig7} for an example.

\begin{figure}[ht!]
\begin{center}
\includegraphics[width=8cm]{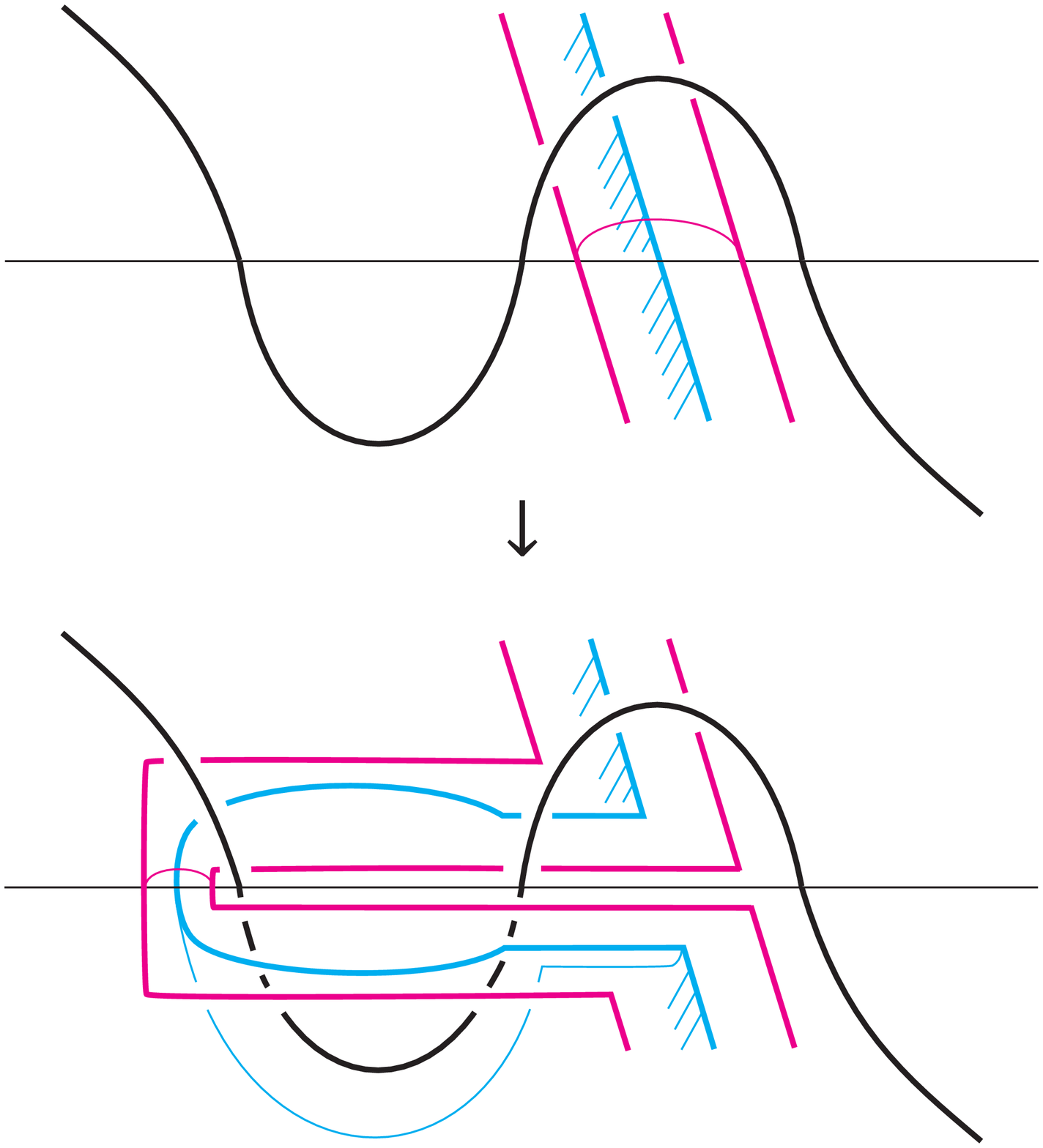}
\caption{}\label{fig7}
\end{center}
\end{figure}

\vspace{0.1cm}

Case $4$. $C_1 \in \mathcal{E}_2$ and $C_2 \in \mathcal{E}_4$.

Since $r_0(C_1)$ is a disk in $\mathcal{D}(S - K)$ and $r_0(C_2) = \overline{D}$,
$r_0(C_1)$ and $r_0(C_2)$ are disjoint.

\vspace{0.1cm}

Case $5$. $C_1 \in \mathcal{E}_3$ and $C_2 \in \mathcal{E}_4$.

Since $r_0(C_1)$ is a disk in $\mathcal{D}(S - K)$ and $r_0(C_2) = \overline{D}$,
$r_0(C_1)$ and $r_0(C_2)$ are disjoint.

\vspace{0.1cm}

Case $6$. Both $C_1, C_2 \in \mathcal{E}_2$.

We can see that both the disk surgery and banding operations of $r_0$ result
disjoint $r_0(C_1)$ and $r_0(C_2)$, or $r_0(C_1) = r_0(C_2)$.

\vspace{0.1cm}

Case $7$. Both $C_1, C_2 \in \mathcal{E}_3$.

We can see that $r_0(C_1)$ and $r_0(C_2)$ are disjoint, or $r_0(C_1) = r_0(C_2)$.

\vspace{0.1cm}

Case $8$. Both $C_1, C_2 \in \mathcal{E}_4$.

In this case, $r_0(C_1) = r_0(C_2) = \overline{D}$.
\end{proof}

Since higher dimensional simplices of $\mathcal{D}(S - \overline{K})$ are determined by $1$-simplices,
$r_1$ extends to a retraction $r : \mathcal{D}(S - \overline{K}) \to {\mathrm{Sus}}(\mathcal{D}(S - K))$.

\vspace{0.1cm}

Suppose that $S - K$ is an index $n$ topologically minimal surface.
First, consider the case of $n = 0$.
Then the incompressibility of $S - K$ implies that the genus of $S$ is $0$ and $K$ is in $1$-bridge position,
i.e. $K$ is a $1$-bridge unknot in $S^3$.
A perturbation of $K$ yields a $2$-bridge splitting for the unknot,
which is strongly irreducible, hence index $1$.
So Theorem \ref{thm} holds when $n = 0$.
Now we assume that $n = 1$ or $n \ge 3$.

\begin{claim}\label{claim}
$\pi_n({\mathrm{Sus}}(\mathcal{D}(S - K))) \ne 1$.
\end{claim}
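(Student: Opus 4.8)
The plan is to exploit the fact that $\mathrm{Sus}(\mathcal{D}(S-K))$ is precisely the (unreduced) suspension $\Sigma\,\mathcal{D}(S-K)$ of the disk complex, with $\overline{D}$ and $\overline{E}$ serving as the two apexes (each joined to all of $\mathcal{D}(S-K)$ but not to one another). Writing $X = \mathcal{D}(S-K)$, I would translate the hypothesis that $S-K$ has index $n$ into the statement that $X$ is $(n-2)$-connected with $\pi_{n-1}(X) \ne 1$, and then transport this information across the suspension using the suspension isomorphism in homology together with the Hurewicz theorem.

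First, for $n \ge 3$ I would argue as follows. Since $X$ is $(n-2)$-connected and $n-1 \ge 2$, the Hurewicz theorem identifies $\pi_{n-1}(X) \cong \widetilde{H}_{n-1}(X)$; as the left side is nontrivial, $\widetilde{H}_{n-1}(X) \ne 0$. Suspension raises connectivity by one, so $\Sigma X$ is $(n-1)$-connected, and the suspension isomorphism gives $\widetilde{H}_n(\Sigma X) \cong \widetilde{H}_{n-1}(X) \ne 0$. A second application of Hurewicz, now to the $(n-1)$-connected space $\Sigma X$ (using $n \ge 2$), yields $\pi_n(\Sigma X) \cong \widetilde{H}_n(\Sigma X) \ne 0$, which is the claim.

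The case $n = 1$ requires a small modification, since here $X$ is only disconnected (strong irreducibility of $S-K$) rather than highly connected. In this case $\Sigma X$ is path-connected, and the suspension isomorphism gives $\widetilde{H}_1(\Sigma X) \cong \widetilde{H}_0(X) \ne 0$ because $X$ has more than one path component. As $H_1$ of a connected space is the abelianization of $\pi_1$, the nonvanishing of $H_1(\Sigma X)$ forces $\pi_1(\Sigma X) \ne 1$, again establishing the claim.

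The delicate point---and the reason $n = 2$ must be excluded---lies exactly at the low-dimensional end of the Hurewicz theorem. For $n = 2$ the complex $X$ is connected with $\pi_1(X) \ne 1$, but the only homological shadow the suspension isomorphism can detect is $\widetilde{H}_1(X)$, the \emph{abelianization} of $\pi_1(X)$. If $\pi_1(X)$ happens to be a nontrivial perfect group, this abelianization vanishes and the argument collapses, even though $\pi_1(X)$ itself is nontrivial. This is precisely why the homotopy-indexed statement fails for $n = 2$ while the homology-indexed version in Theorem \ref{thm2} survives. I expect the main conceptual step to be recognizing that this abelianization gap is the \emph{sole} obstruction, so that for every $n \ne 2$ the two-step Hurewicz/suspension passage goes through cleanly.
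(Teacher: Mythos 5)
Your proposal is correct, and its scope ($n=1$ and $n\ge 3$, with the perfect-group obstruction isolated at $n=2$) matches the paper exactly, but the route through the key step differs. For $n\ge 3$ the paper invokes the Freudenthal suspension theorem (Hatcher, Corollary 4.24): the suspension map $\pi_{i}(\mathcal{D}(S-K))\to\pi_{i+1}({\mathrm{Sus}}(\mathcal{D}(S-K)))$ is an isomorphism for $i<2(n-1)-1$, which applies to $i=n-1$ precisely when $n\ge 3$; you instead run Hurewicz on the $(n-2)$-connected complex, the homology suspension isomorphism $\widetilde H_n(\Sigma X)\cong\widetilde H_{n-1}(X)$, and Hurewicz again on the $(n-1)$-connected suspension. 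Both are valid and yield the same group $\pi_n({\mathrm{Sus}}(\mathcal{D}(S-K)))\cong\pi_{n-1}(\mathcal{D}(S-K))$; notably, your tool chain is exactly the Mayer--Vietoris/Hurewicz machinery the paper itself deploys in Section \ref{sec3} to analyze $n=2$ and to prove Theorem \ref{thm2}, so in effect you have run Section \ref{sec3} uniformly for all $n\ne 2$, where the paper prefers a one-line citation of Freudenthal for $n\ge 3$. For $n=1$ your argument is actually leaner than the paper's: the paper asserts that $\mathcal{D}(S-K)$ consists of two \emph{contractible} components (the subcomplexes of disks in $V^+-K$ and $V^--K$, a true but nontrivial structural fact about disk complexes) and concludes $\pi_1$ of the suspension is infinite cyclic, whereas you need only disconnectedness, i.e. $\widetilde H_0\ne 0$, plus the suspension isomorphism and the fact that $H_1$ of a path-connected space is the abelianization of $\pi_1$. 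What each approach buys: the paper's is shorter where citation suffices and gives the sharper description of the $n=1$ suspension; yours is more self-contained, uses one uniform mechanism across $n=1$, $n\ge 3$, and the homological Theorem \ref{thm2}, and makes transparent why the abelianization gap at $\pi_1$ is the sole reason $n=2$ must be excluded.
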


\begin{proof}[Proof of Claim \ref{claim}]
Suppose that $n = 1$.
Since $S - K$ is an index $1$ topologically minimal surface,
$\pi_0(\mathcal{D}(S - K)) \ne 1$, i.e. $\mathcal{D}(S - K)$ is disconnected.
In fact, it has two contractible components---
the subcomplexes spanned by compressing disks in $V^+ - K$ and $V^- - K$.
Then the fundamental group of the suspension of $\mathcal{D}(S - K)$ is infinite cyclic and the claim holds.

So we may assume that $n \ge 3$.
Since $S - K$ is an index $n$ topologically minimal surface,
$\mathcal{D}(S - K)$ is $(n-2)$-connected and $\pi_{n-1}(\mathcal{D}(S - K)) \ne 1$.
It is known that the suspension map
$$\pi_{i}(\mathcal{D}(S - K)) \to \pi_{i+1}({\mathrm{Sus}}(\mathcal{D}(S - K)))$$
is an isomorphism for $i < 2(n-1) - 1$ and a surjection for $i = 2(n-1) - 1$.
(See e.g. \cite[Corollary 4.24]{Hatcher}.)
Hence $\pi_n({\mathrm{Sus}}(\mathcal{D}(S - K))) \ne 1$.
\end{proof}

The retraction $r$ induces a surjective map
$r_{\ast} : \pi_n(\mathcal{D}(S - \overline{K})) \to \pi_n({\mathrm{Sus}}(\mathcal{D}(S - K)))$.
So $\pi_n(\mathcal{D}(S - \overline{K})) \ne 1$, and the topological index of $S - \overline{K}$ is at most $n+1$.

\section{When $n = 2$ and the non-trivial homology condition}\label{sec3}

In this section we investigate the case of $n = 2$ in detail.
Suppose that $S - K$ is an index $2$ topologically minimal surface.
Then $\pi_0(\mathcal{D}(S - K)) = 1$ and $\pi_1(\mathcal{D}(S - K)) \ne 1$.
The suspension map $\pi_{1}(\mathcal{D}(S - K)) \to \pi_{2}({\mathrm{Sus}}(\mathcal{D}(S - K)))$
is a surjection and does not guarantee that $\pi_{2}({\mathrm{Sus}}(\mathcal{D}(S - K)))$ is non-trivial.

The suspension ${\mathrm{Sus}}(\mathcal{D}(S - K))$ is
a union of an upper cone $A$ and a lower cone $B$ and $A \cap B \simeq \mathcal{D}(S - K)$.
By the van Kampen theorem, $\pi_1({\mathrm{Sus}}(\mathcal{D}(S - K))) = 1$
because $\pi_1(A) = \pi_1(B) = 1$.
Applying the Mayer-Vietoris sequence, the long exact sequence
$$\cdots \to \widetilde{H}_{i+1}(A) \oplus \widetilde{H}_{i+1}(B) \to
\widetilde{H}_{i+1}({\mathrm{Sus}}(\mathcal{D}(S - K))) \to
\widetilde{H}_i(\mathcal{D}(S - K)) \to
\widetilde{H}_i(A) \oplus \widetilde{H}_i(B) \to \cdots$$
is exact.
Since $\widetilde{H}_i(A) = \widetilde{H}_i(B) = 1$ for all $i$, we have
$$\widetilde{H}_{i+1}({\mathrm{Sus}}(\mathcal{D}(S - K))) \simeq \widetilde{H}_i(\mathcal{D}(S - K)).$$
In particular, $\widetilde{H}_2({\mathrm{Sus}}(\mathcal{D}(S - K))) \simeq \widetilde{H}_1(\mathcal{D}(S - K))$.
Since $\pi_0({\mathrm{Sus}}(\mathcal{D}(S - K))) = 1$ and
$\pi_1({\mathrm{Sus}}(\mathcal{D}(S - K))) = 1$, by the Hurewicz theorem
$\pi_2({\mathrm{Sus}}(\mathcal{D}(S - K))) \simeq \widetilde{H}_2({\mathrm{Sus}}(\mathcal{D}(S - K)))$.
So we conclude that
$\pi_2({\mathrm{Sus}}(\mathcal{D}(S - K))) \simeq \widetilde{H}_1(\mathcal{D}(S - K))$.
The group $\widetilde{H}_1(\mathcal{D}(S - K))$ is isomorphic to
$\pi_1(\mathcal{D}(S - K)) / C$, where $C$ is the commutator subgroup of $\pi_1(\mathcal{D}(S - K))$.
Hence if $\pi_1(\mathcal{D}(S - K))$ is not equal to $C$, i.e.
if $\pi_1(\mathcal{D}(S - K))$ is not a perfect group,
then $\pi_2({\mathrm{Sus}}(\mathcal{D}(S - K))) \ne 1$ and
the topological index of $S - \overline{K}$ is at most $3$.

\begin{proof}[Proof of Theorem \ref{thm2}]
Suppose that $S - K$ is an index $n$ strongly topologically minimal surface.
The case of $n=0$ is similar to the proof of Theorem \ref{thm}.
So we assume that $n \ge 1$.
By definition, $\widetilde{H}_{n-1}(\mathcal{D}(S - K)) \ne 1$.
The above mentioned isomorphism
$\widetilde{H}_n({\mathrm{Sus}}(\mathcal{D}(S - K))) \to \widetilde{H}_{n-1}(\mathcal{D}(S - K))$
implies that $\widetilde{H}_n({\mathrm{Sus}}(\mathcal{D}(S - K))) \ne 1$.
The retraction $r$ induces a surjective map
$r_{\ast} : \widetilde{H}_n(\mathcal{D}(S - \overline{K})) \to \widetilde{H}_n({\mathrm{Sus}}(\mathcal{D}(S - K)))$.
So $\widetilde{H}_n(\mathcal{D}(S - \overline{K})) \ne 1$, and
 the strong topological index of $S - \overline{K}$ is at most $n+1$.
\end{proof}

\noindent {\bf Acknowledgements.}
The author would like to thank Jae Choon Cha and Daewoong Lee for helpful comments.


\end{document}